\documentclass[a4paper,12pt,final]{amsart}
\usepackage{times,a4wide,mathrsfs,amssymb,amsmath,amsthm,wasysym}

\newcommand{\C}{\mathbb{C}}
\newcommand{\ZZ}{\mathbb{Z}}

\newcommand{\QQ}{\mathbb{Q}}

\newcommand{\PP}{\mathbb{P}}

\newcommand{\Sy}{\mathfrak S}

\newcommand{\MM}{\mathcal M}

 \makeatletter
\newcommand*{\da@rightarrow}{\mathchar"0\hexnumber@\symAMSa 4B }
\newcommand*{\da@leftarrow}{\mathchar"0\hexnumber@\symAMSa 4C }
\newcommand*{\xdashrightarrow}[2][]{%
  \mathrel{%
    \mathpalette{\da@xarrow{#1}{#2}{}\da@rightarrow{\,}{}}{}%
  }%
}
\newcommand{\xdashleftarrow}[2][]{%
  \mathrel{%
    \mathpalette{\da@xarrow{#1}{#2}\da@leftarrow{}{}{\,}}{}%
  }%
}
\newcommand*{\da@xarrow}[7]{%
  \sbox0{$\ifx#7\scriptstyle\scriptscriptstyle\else\scriptstyle\fi#5#1#6\m@th$}%
  \sbox2{$\ifx#7\scriptstyle\scriptscriptstyle\else\scriptstyle\fi#5#2#6\m@th$}%
  \sbox4{$#7\dabar@\m@th$}%
  \dimen@=\wd0 %
  \ifdim\wd2 >\dimen@
    \dimen@=\wd2 %
  \fi
  \count@=2 %
  \def\da@bars{\dabar@\dabar@}%
  \@whiledim\count@\wd4<\dimen@\do{%
    \advance\count@\@ne
    \expandafter\def\expandafter\da@bars\expandafter{%
      \da@bars
      \dabar@ 
    }%
  }%
  \mathrel{#3}%
  \mathrel{%
    \mathop{\da@bars}\limits
    \ifx\\#1\\%
    \else
      _{\copy0}%
    \fi
    \ifx\\#2\\%
    \else
      ^{\copy2}%
    \fi
  }%
  \mathrel{#4}%
}
\makeatother

\DeclareMathOperator{\aut}{Aut}

\DeclareMathOperator{\ide}{id}

\newtheorem{theorem}{Theorem}[section]

\newtheorem{corollary}[theorem]{Corollary}
\newtheorem{proposition}[theorem]{Proposition}
\newtheorem{conjecture}[theorem]{Conjecture}
\newtheorem{remark}[theorem]{Remark}
\newtheorem{definition}[theorem]{Definition}
\newtheorem{convention}{Conventions}

\newtheorem{nonumbering}{Theorem}

\newtheorem{nonumberingc}{Corollary}

\newtheorem{nonumberingt}{Acknowledgements}

\begin{document}
\author[Robert Laterveer]
{Robert Laterveer}

\address{Institut de Recherche Math\'ematique Avanc\'ee,
CNRS -- Universit\'e 
de Strasbourg,\
7 Rue Ren\'e Des\-car\-tes, 67084 Strasbourg CEDEX,
FRANCE.}
\email{robert.laterveer@math.unistra.fr}

\title{Zero-cycles on Garbagnati surfaces}

\begin{abstract} Garbagnati has constructed certain surfaces of general type that are bidouble planes as well as double covers of K3 surfaces. In this note, we study the Chow groups (and Chow motive) of these surfaces.
 \end{abstract}

\keywords{Algebraic cycles, Chow group, motive, Bloch--Beilinson filtration, surface of general type, K3 surface}
\subjclass[2010]{Primary 14C15, 14C25, 14C30, 14J28, 14J29.}

\maketitle

\section{Introduction}

 In \cite{Gar}, Garbagnati studies K3 surfaces which admit a double cover. In the course of this work, she constructs certain surfaces of general type that are birational to bidouble covers of $\PP^2$ and that we propose to call {\em Garbagnati surfaces\/}. These surfaces have the remarkable property of being 
 ``K3 burgers'', i.e. they are of geometric genus $m:=p_g(S)\in\{1,2,3\}$ and their transcendental cohomology splits
   \[ H^2_{tr}(S,\QQ)=\bigoplus_{j=1}^m H^2_{tr}(T_j,\QQ)\ ,\]
   where the $T_j$ are K3 surfaces. (For other examples of K3 burgers, cf. \cite{CP}, \cite{triple}.) 
   
  The main result of the present note is that this ``K3 burger'' relation is also valid on the level of Chow groups (and Chow motives):

 \begin{nonumbering}[=Theorem \ref{main}] Let $S$ be a Garbagnati surface. 
 Let $T_1,\ldots,T_m$ be the associated K3 surfaces where $m=p_g(S)$. There is an isomorphism of Chow groups
   \[   A^2_{hom}(S)\  \xrightarrow{\cong}\ \bigoplus_{j=1}^m A^2_{hom}(T_j)  \ .\]
   Moreover, there is an isomorphism of Chow motives
   \[  h^2_{tr}(S)\ \cong\   \bigoplus_{j=1}^m h^2_{tr}(T_j)\ \ \ \hbox{in}\ \MM_{\rm rat}\ .\]
   (Here, $h^2_{tr}()$ denotes the transcendental part of the motive, cf. \S \ref{ss:tr}.)
  \end{nonumbering}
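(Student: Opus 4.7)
The plan is to exploit the $(\ZZ/2)^2$-action on $S$ (or on a suitable smooth equivariant model) coming from the bidouble cover structure $S\to\PP^2$. Write $G=\{1,\sigma_1,\sigma_2,\sigma_3\}$ with $\sigma_3=\sigma_1\sigma_2$, and let $Y_i:=S/\langle\sigma_i\rangle$ be the three intermediate quotients. Each $Y_i$ is a double cover of $\PP^2=S/G$, so after resolving rational double points it is either a K3 surface (which, by the cohomological burger relation, happens for exactly $m=p_g(S)$ indices, producing precisely the $T_j$) or a surface with vanishing transcendental motive. This identifies the K3 summands on the right hand side with the three quotient directions, and suggests that the motivic splitting should come directly from the character decomposition of $G$.

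First I would decompose $h(S)$ motivically via the $G$-action. Since $|G|=4$ is invertible in $\QQ$, the idempotents
\[
p_\chi \ := \ \tfrac{1}{4}\sum_{g\in G}\chi(g)\,\Gamma_g\ \in\ A^2(S\times S)_\QQ,
\]
attached to the four characters $\chi$ of $G$, are pairwise orthogonal and sum to $\Delta_S$. Intersecting with the standard Chow--K\"unneth decomposition of a surface and with the transcendental/algebraic splitting of $h^2$, this refines to a decomposition
\[
h^2_{tr}(S)\ =\ \bigoplus_{\chi} h^2_{tr}(S)^\chi\quad\hbox{in }\MM_{\rm rat},
\]
whose trivial-character summand, being the transcendental motive of $S/G=\PP^2$, vanishes.

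The second step is to identify each non-trivial eigenspace with $h^2_{tr}(T_j)$, or else show it vanishes. Using the correspondence $\tfrac{1}{2}\Gamma_{\pi_i}\in A^2(S\times Y_i)_\QQ$ coming from the quotient map $\pi_i:S\to Y_i$ together with its transpose, the motive $h^2_{tr}(Y_i)$ is cut out as the $\sigma_i$-invariant part of $h^2_{tr}(S)$; this invariant part further splits into the eigenspaces for characters with $\chi(\sigma_i)=1$, and since the trivial character already contributes zero only the $\chi_i$-eigenspace survives. Resolving rational double points gives either the K3 surface $T_j$ --- in which case $h^2_{tr}$ is preserved, being a birational invariant of smooth projective surfaces --- or a surface with trivial transcendental motive, consistent with the cohomological burger decomposition. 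Summing over $i$ yields the motivic isomorphism, and applying the functor $A^2_{hom}(-)$ to both sides produces the statement for Chow groups.

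The main obstacle I expect is bookkeeping around singularities. The $(\ZZ/2)^2$-action is naturally defined on a singular bidouble cover model rather than on $S$ itself, so one must work on a smooth $G$-equivariant resolution and check that the correspondences $p_\chi$ and $\tfrac{1}{2}\Gamma_{\pi_i}$ descend cleanly, without creating spurious discrepancies between $h^2_{tr}$ of the model and of $S$. The three cases $m=1,2,3$ in Garbagnati's construction are distinguished by different branch configurations, so one must verify case by case which quotients $Y_i$ admit a K3 resolution and which have rational (or otherwise trivial-transcendental) resolution. A secondary point is to match the $\chi_i$-eigenspace with the refined Chow--K\"unneth transcendental motive of the K3 surface $T_j$ (in the sense of Kahn--Murre--Pedrini), so that the final isomorphism carries the intended motivic structure rather than being merely a cohomological comparison.
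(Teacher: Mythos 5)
Your proposal follows essentially the same route as the paper: decompose via the $(\ZZ/2)^2$-character idempotents $\tfrac14\sum_g\chi(g)\Gamma_g$ (the paper writes these as $\tfrac14(\Delta_S\pm\Gamma_{\sigma_1})\circ(\Delta_S\pm\Gamma_{\sigma_2})$), kill the trivial-character piece as $h^2_{tr}(\PP^2)$, identify each nontrivial eigenpiece with $h^2_{tr}$ of the quotient $S/\langle\sigma_i\rangle$, and conclude case by case according to which quotients resolve to the K3 surfaces $T_j$ and which are rational; the paper settles your singularity bookkeeping by working with quotient varieties via Fulton's operational Chow theory and a cited result that resolving the (cyclic quotient) singularities by rational curves does not change $A^2_{hom}$ or $h^2_{tr}$. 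The only cosmetic difference is order: the paper proves the Chow-group statement directly by eigenspace decomposition and then the motivic statement, whereas you deduce the former from the latter.
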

 
 This is in agreement with the Bloch--Beilinson conjectures \cite{J}. 
 As a consequence of the main result, one can verify an old conjecture of Voisin \cite{V9} for certain Garbagnati surfaces:
  
  \begin{nonumberingc}[=Corollary \ref{cor0}]  Let $S$ be a Garbagnati surface of type G1, G2b or G3 (cf. \S \ref{ss:G}).
  Let $n$ be an integer strictly larger than the geometric genus $p_g(S)$. Then for any degree $0$ $0$-cycles $a_1,\ldots,a_n\in A^2_{}(S)_{\ZZ}$, one has
  \[ \sum_{\sigma\in\Sy_n} \hbox{sgn}(\sigma) a_{\sigma(1)}\times\cdots\times a_{\sigma(n)}=0\ \ \ \hbox{in}\ A^{2n}(S^n)_{\ZZ}\ .\]
  (Here $\Sy_n$ is the symmetric group on $n$ elements, and $ \hbox{sgn}(\sigma)$ is the sign of the permutation $\sigma$.
  The notation $a_1\times\cdots\times a_n$ is shorthand for the $0$-cycle $(p_1)^\ast(a_1)\cdot (p_2)^\ast(a_2)\cdots (p_n)^\ast(a_n)$ on 
  $S^n$, where the $p_j\colon S^n\to S$ are the various projections.)
   \end{nonumberingc}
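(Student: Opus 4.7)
The plan is to reduce the antisymmetric identity on $S$ to the analogous one on the K3 summands $T_j$, where Voisin's original theorem for K3 surfaces applies: for any K3 surface $T$ and any $a,b\in A^2_{hom}(T)_{\ZZ}$, one has $a\times b=b\times a$ in $A^4(T\times T)_{\ZZ}$. This will be the only nontrivial external input.

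By Theorem \ref{main}, there exist correspondences $\Gamma_j\in A^2(T_j\times S)$, inducing the underlying motivic isomorphism, such that $\bigoplus_{j=1}^m (\Gamma_j)_*\colon\bigoplus_{j=1}^m A^2_{hom}(T_j)\xrightarrow{\cong}A^2_{hom}(S)$. Writing each $a_i=\sum_{j=1}^m (\Gamma_j)_*(c_{i,j})$ with $c_{i,j}\in A^2_{hom}(T_j)$, compatibility of external products with correspondence-pushforward yields
\[
a_{\sigma(1)}\times\cdots\times a_{\sigma(n)}=\sum_{f}(\Gamma_{f(1)}\times\cdots\times\Gamma_{f(n)})_*\bigl(c_{\sigma(1),f(1)}\times\cdots\times c_{\sigma(n),f(n)}\bigr)\ ,
\]
the sum running over all functions $f\colon\{1,\dots,n\}\to\{1,\dots,m\}$. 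Multiplying by $\hbox{sgn}(\sigma)$ and summing over $\sigma\in\Sy_n$, the pushforwards can be pulled outside the $\sigma$-sum; it thus suffices to show that for each $f$ the inner antisymmetric sum
\[
\sum_{\sigma\in\Sy_n}\hbox{sgn}(\sigma)\, c_{\sigma(1),f(1)}\times\cdots\times c_{\sigma(n),f(n)}
\]
vanishes in $A^{2n}(T_{f(1)}\times\cdots\times T_{f(n)})_{\ZZ}$, a torsion-free group by Rojtman's theorem (the Albanese of any product of K3 surfaces is trivial).

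The hypothesis $n>m$ now enters by pigeonhole: for every $f$ there exist $p\neq q$ with $f(p)=f(q)=:j_0$. Setting $\tau=(p\ q)$ and pairing $\sigma$ with $\sigma\tau$ across the cosets of $\langle\tau\rangle\subset\Sy_n$, the two resulting summands carry opposite signs, and their underlying cycles differ only by swapping $c_{\sigma(p),j_0}$ and $c_{\sigma(q),j_0}$ at the positions $p$ and $q$ (both factors being $T_{j_0}$). After reordering the factors of the product variety so that these two $T_{j_0}$ factors are adjacent, the difference becomes the external product of the unaffected factors with $c\times c'-c'\times c$ on $T_{j_0}\times T_{j_0}$, which vanishes by Voisin's K3 theorem. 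Each pair therefore cancels, and the inner sum is zero. The main obstacle I anticipate is not conceptual but purely combinatorial bookkeeping: tracking how external products behave under permutations of the factor varieties, and verifying that the restriction to types G1, G2b, G3 is precisely what supplies the ambient torsion-freeness (presumably via $q(S)=0$ and Rojtman's theorem) needed to upgrade the inherent $\QQ$-coefficient identity of Theorem \ref{main} to the $\ZZ$-coefficient statement demanded by the corollary.
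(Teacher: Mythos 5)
There is a genuine gap, and it sits exactly at the point you flag as your ``only nontrivial external input''. The statement ``for any K3 surface $T$ and any $a,b\in A^2_{hom}(T)_{\ZZ}$ one has $a\times b=b\times a$'' is \emph{not} a theorem of Voisin: this is precisely Voisin's conjecture in the $p_g=1$ case, and it is still open for a general K3 surface (the paper says so explicitly). Voisin proved it only for special K3 surfaces, notably desingularized double planes branched along two cubics \cite[Theorem 3.4]{V9}, and it is also known for double planes branched along a quartic and a quadric \cite[Proposition 14]{16.5}. This is the whole reason the corollary is restricted to types G1, G2b and G3: in those cases the K3 surfaces $T_j$ produced by the bidouble-cover structure are exactly double planes of these two kinds, whereas for type G2a one of the $T_j$ is a double plane branched along a quintic and a line, for which the conjecture is unknown (see the Remark following Corollary \ref{cor0}). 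Your argument, as written, would apply verbatim to G2a as well, which is a clear sign that the needed input has been overstated. To close the gap you must identify the $T_j$ concretely case by case and invoke the two known special cases of Voisin's conjecture; you also misattribute the role of the G1/G2b/G3 restriction to torsion-freeness, which is not where it enters. A smaller structural slip: Theorem \ref{main} is a $\QQ$-coefficient statement, so the decomposition $a_i=\sum_j(\Gamma_j)_\ast(c_{i,j})$ is only available rationally; the correct place for Roitman's theorem is at the very start, to reduce the integral identity in $A_0(S^n)_{\ZZ}$ to its $\QQ$-coefficient version (using $q(S)=0$, so $\hbox{Alb}(S^n)=0$ and the torsion of $A_0(S^n)_{\ZZ}$ vanishes), not on the products of the $T_j$ after you have already passed to $\QQ$-linear data.

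Apart from this, your combinatorial skeleton is sound and is in substance an explicit, cycle-level unpacking of the paper's argument: the paper restates the conjecture as $A_0\bigl(\wedge^n h^2_{tr}(S)\bigr)=0$, plugs in $h^2_{tr}(S)\cong\bigoplus_j h^2_{tr}(T_j)$ from Theorem \ref{main}, and expands $\wedge^n$ of the direct sum into tensor products $\wedge^{n_1}h^2_{tr}(T_1)\otimes\cdots\otimes\wedge^{n_m}h^2_{tr}(T_m)$; the pigeonhole $n>m\Rightarrow$ some $n_j\ge 2$ is the motivic counterpart of your transposition-pairing cancellation, and the vanishing of each $\wedge^{n_j}$-factor is exactly Voisin's conjecture for $T_j$ --- which must then be quoted in the specific known cases listed above rather than assumed for all K3 surfaces.
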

   
   For surfaces of type G1, this was already proven in \cite[Proposition 29]{16.5}, but the present argument seems easier and more conceptual.
   
  Another consequence is a ``motivic Torelli theorem'' for Garbagnati surfaces:
  
  \begin{nonumberingc}[=Corollary \ref{cor1}] Let $S$ and $S^\prime$ be two Garbagnati surfaces, and assume $S$ and $S^\prime$ are isometric (i.e., there is an isomorphism of $\QQ$-vector spaces $H^2_{tr}(S,\QQ)\cong H^2_{tr}(S^\prime,\QQ)$ compatible with Hodge structures and cup product). Then there is an isomorphism of Chow motives
    \[ h^2_{tr}(S)\ \cong\ h^2_{tr}(S^\prime)\ \ \ \hbox{in}\ \MM_{\rm rat}\ .\]
  \end{nonumberingc}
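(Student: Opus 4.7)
The plan is to combine Theorem \ref{main} with a motivic Torelli statement at the level of K3 surfaces.

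First, I would apply Theorem \ref{main} to both $S$ and $S^\prime$ to obtain decompositions
\[ h^2_{tr}(S)\ \cong\ \bigoplus_{j=1}^{m} h^2_{tr}(T_j),\qquad h^2_{tr}(S^\prime)\ \cong\ \bigoplus_{k=1}^{m^\prime} h^2_{tr}(T^\prime_k)\ \ \hbox{in}\ \MM_{\rm rat}, \]
where $T_j$ and $T^\prime_k$ are the associated K3 surfaces, with $m=p_g(S)$ and $m^\prime=p_g(S^\prime)$. Taking Betti realization yields the corresponding decomposition of transcendental cohomology (this is the K3 burger property recalled in the introduction).

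Next, the hypothesized Hodge isometry $H^2_{tr}(S,\QQ)\cong H^2_{tr}(S^\prime,\QQ)$ combined with these decompositions gives a Hodge isometry
\[ \bigoplus_{j=1}^{m} H^2_{tr}(T_j,\QQ)\ \cong\ \bigoplus_{k=1}^{m^\prime} H^2_{tr}(T^\prime_k,\QQ). \]
For any projective K3 surface $T$, the transcendental piece $H^2_{tr}(T,\QQ)$ is a simple polarized $\QQ$-Hodge structure, and the category of polarized $\QQ$-Hodge structures is semisimple. The Krull--Schmidt property then forces $m=m^\prime$ (which is also consistent with the equality $p_g(S)=p_g(S^\prime)$ implied by the given isometry) and provides a permutation $\sigma\in\Sy_m$ and Hodge isometries $H^2_{tr}(T_j,\QQ)\cong H^2_{tr}(T^\prime_{\sigma(j)},\QQ)$ for each $j$.

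Finally, I would invoke motivic Torelli for K3 surfaces: a rational Hodge isometry between the transcendental lattices of two projective K3 surfaces lifts to an isomorphism of their transcendental Chow motives. This deep input (due to Huybrechts, resting on Buskin's theorem which realizes such Hodge isometries by algebraic cycles) applied to each pair $(T_j,T^\prime_{\sigma(j)})$ yields isomorphisms $h^2_{tr}(T_j)\cong h^2_{tr}(T^\prime_{\sigma(j)})$ in $\MM_{\rm rat}$. Summing over $j$ and composing with the decompositions of the first step produces the sought isomorphism $h^2_{tr}(S)\cong h^2_{tr}(S^\prime)$. The main obstacle is precisely this final invocation: everything else is a formal consequence of Theorem \ref{main} together with semisimplicity, whereas lifting a transcendental Hodge isometry to an algebraic correspondence between K3 surfaces is a substantial cycle-theoretic theorem.
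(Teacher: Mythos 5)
Your proposal is correct and follows essentially the same route as the paper: decompose via Theorem \ref{main}, match the K3 pieces using indecomposability (simplicity) of the transcendental Hodge structure of a K3 surface, and conclude with Huybrechts' motivic Torelli for K3 surfaces (the paper's reference \cite{Huy2}, resting on Buskin's theorem). The only cosmetic difference is that the paper also explicitly checks that the inclusions $H^2_{tr}(T_j,\QQ)\subset H^2_{tr}(S,\QQ)$ are compatible with cup product, so that the restricted maps are genuine isometries before invoking Huybrechts.
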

    
  Other consequences are Kimura finite-dimensionality for certain Garbagnati surfaces (Corollary \ref{cor2}), and a ``relative Bloch conjecture'' type of result concerning the action of certain automorphisms (Corollary \ref{cor1.5}). 

 \vskip0.6cm

\begin{convention} In this article, the word {\sl variety\/} will refer to a reduced irreducible scheme of finite type over $\C$. A {\sl subvariety\/} is a (possibly reducible) reduced subscheme which is equidimensional. 

{\bf All Chow groups will be with rational coefficients}: we will denote by $A_j(X)$ the Chow group of $j$-dimensional cycles on $X$ with $\QQ$-coefficients; for $X$ smooth of dimension $n$ the notations $A_j(X)$ and $A^{n-j}(X)$ are used interchangeably. 

The notations $A^j_{hom}(X)$, $A^j_{AJ}(X)$ will be used to indicate the subgroups of homologically trivial, resp. Abel--Jacobi trivial cycles.
For a morphism $f\colon X\to Y$, we will write $\Gamma_f\in A_\ast(X\times Y)$ for the graph of $f$.
The contravariant category of Chow motives (i.e., pure motives with respect to rational equivalence as in \cite{Sc}, \cite{MNP}) will be denoted $\MM_{\rm rat}$.
\end{convention}

\section{Preliminaries}

\subsection{Garbagnati surfaces}
\label{ss:G}

\begin{definition}[\cite{Gar}]\label{def} 
A surface of type G1 is a surface $X$ as in \cite[Proposition 6.1]{Gar}. A surface if type G1 has $p_g=1$, and is birational to a bidouble cover of $\PP^2$ branched along two cubics and a line.

A surface of type G2a is a surface $X_6^{(1)}$ as in \cite[Section 5.4]{Gar}. A surface of type G2a has $p_g=2$, and is birational to a bidouble cover of $\PP^2$ branched along a quintic and two lines.

A surface of type G2b is a surface $X_9^{(2)}$ as in \cite[Section 5.4]{Gar}. A surface of type G2b has $p_g=2$, and is birational to a bidouble cover of $\PP^2$ branched along a quartic and two quadrics.

A surface of type G3 is a surface $Z$ as in \cite[Proposition 6.1]{Gar}. A surface of type G3 has $p_g=3$, and is birational to a bidouble cover of $\PP^2$ branched along three cubics.
\end{definition}

\begin{remark} For details on the construction of bidouble covers, cf. \cite{Cat2}.

The bidouble covers birational to surfaces of type G1 are also known as {\em special Kunev surfaces}, and are studied (particularly in relation to Torelli problems, where they provide counterexamples) in \cite{Cat}, \cite{Kun}, \cite{Tod}.

The bidouble covers birational to surfaces of type G2a are also called {\em special Horikawa surfaces\/}. These surfaces are studied in \cite{PZ}, where it is shown they satisfy generic global Torelli.
\end{remark}

\subsection{Quotient varieties}
\label{ssq}

\begin{definition} A {\em projective quotient variety\/} is a variety
  \[ X=Y/G\ ,\]
  where $Y$ is a smooth projective variety and $G\subset\hbox{Aut}(Y)$ is a finite group.
  \end{definition}
  
 \begin{proposition}[Fulton \cite{F}]\label{quot} Let $X$ be a projective quotient variety of dimension $n$. Let $A^\ast(X)$ denote the operational Chow cohomology ring with $\QQ$-coefficients. The natural map
   \[ A^i(X)\ \to\ A_{n-i}(X) \]
   is an isomorphism for all $i$.
   \end{proposition}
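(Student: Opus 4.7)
The plan is to reduce the statement to ordinary Poincar\'e duality on the smooth variety $Y$, by identifying both sides of the proposed isomorphism with the $G$-invariant part of the corresponding group on $Y$. The key feature that makes this work is that $\QQ$-coefficients allow averaging over the finite group $G$.

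Concretely, let $\pi\colon Y\to X=Y/G$ denote the quotient morphism; this is finite surjective of degree $|G|$. First I would establish the identification $\pi^\ast\colon A_\ast(X)\xrightarrow{\cong} A_\ast(Y)^G$. This follows from the two identities $\pi_\ast\pi^\ast=|G|\cdot\ide$ on $A_\ast(X)$ and $\pi^\ast\pi_\ast=\sum_{g\in G}g_\ast$ on $A_\ast(Y)$: the map $\frac{1}{|G|}\pi_\ast$ provides an inverse to $\pi^\ast$ on the $G$-invariant part. Next I would prove the analogous statement for operational Chow cohomology, namely $\pi^\ast\colon A^\ast(X)\xrightarrow{\cong} A^\ast(Y)^G$. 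Since $\pi$ is finite, pushforward of bivariant classes is available, and the same averaging argument goes through.

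Finally, because $Y$ is smooth, classical Poincar\'e duality provides a $G$-equivariant isomorphism $A^i(Y)\xrightarrow{\cong} A_{n-i}(Y)$ given by cap product with $[Y]$. Taking $G$-invariants and combining with the two identifications above yields the desired isomorphism $A^i(X)\cong A_{n-i}(X)$. Compatibility with the natural cap-product map on $X$ is then a consequence of the projection formula, which gives $\pi_\ast(\pi^\ast c\cap [Y])=c\cap\pi_\ast[Y]=|G|\cdot(c\cap [X])$, so $\pi^\ast$ intertwines the two candidate duality maps up to the factor $|G|$.

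The main obstacle I anticipate is the identification $A^\ast(X)\cong A^\ast(Y)^G$ on the operational side, which is more delicate than its homological counterpart since bivariant classes are compatible systems of operations rather than cycles. One must check carefully that $G$-averaging respects the axioms of a bivariant theory (compatibility with pullback, pushforward and cap product across all test schemes $T\to X$), and that a $G$-invariant operational class on $Y$ genuinely descends to a well-defined operation on the singular quotient $X$. Everything else is an application of the projection formula together with the smoothness of $Y$.
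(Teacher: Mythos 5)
Your overall plan—identify both sides with $G$-invariants on the smooth cover $Y$ and then apply duality on $Y$—is a natural one, and several steps are sound: the homological identification $A_\ast(X)_\QQ\cong A_\ast(Y)_\QQ^G$ is indeed standard (it is Fulton, Example 1.7.6, though note that $\pi^\ast$ on Chow homology is not a general functoriality and has to be defined at cycle level for the quotient map), the duality $\cap[Y]$ on $Y$ is $G$-equivariant, and your projection-formula computation correctly shows that the composite would agree with $c\mapsto c\cap[X]$ up to the factor $|G|$. Even injectivity of $\pi^\ast$ on $A^\ast(X)_\QQ$ can be made to work: for any $T\to X$ the map $p\colon T\times_X Y\to T$ is proper surjective, hence $p_\ast$ is surjective with $\QQ$-coefficients, and compatibility of operational classes with proper pushforward then forces $c\cap\alpha=0$ whenever $\pi^\ast c=0$.

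The genuine gap is the surjectivity of $\pi^\ast\colon A^\ast(X)_\QQ\to A^\ast(Y)_\QQ^G$, i.e. the descent of a $G$-invariant operational class to the singular quotient, and your justification (``since $\pi$ is finite, pushforward of bivariant classes is available, and the same averaging argument goes through'') does not hold as stated. The bivariant pushforward along the proper map $\pi$ sends $A^p(Y\to X)$ to $A^p(X\to X)$, not $A^p(Y\to Y)$ to $A^p(X\to X)$; to transfer a class from $A^p(Y)=A^p(Y\to Y)$ one would need to multiply by a canonical orientation class in $A^\ast(Y\to X)$, and no such class exists in general because $\pi$ is neither flat nor l.c.i. (quotient maps are non-flat over the singular locus, e.g. $\A^2\to\A^2/(\ZZ/2)$). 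Likewise, the direct attempt to define the descended operation fails at the same point: for a test scheme $T\to X$ there is no pullback $A_\ast(T)\to A_\ast(T\times_X Y)$ along the finite non-flat map $p$, and choosing a preimage under the surjective $p_\ast$ creates a well-definedness problem that requires control of $\ker(p_\ast)$. This descent is precisely the nontrivial content of the cited result (Fulton, Example 17.4.10; see also Kimura's and Vistoli's work on bivariant Chow theory of quotients), so as written your argument presupposes a statement essentially equivalent to the proposition itself; you flag it as the ``main obstacle,'' but it is left unresolved. The paper's own proof is simply the citation of Fulton's example, which is the efficient way out here.
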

   
   \begin{proof} This is \cite[Example 17.4.10]{F}.
      \end{proof}

\begin{remark} It follows from Proposition \ref{quot} that the formalism of correspondences goes through unchanged for projective quotient varieties (this is also noted in \cite[Example 16.1.13]{F}). We can thus consider motives $(X,p,0)\in\MM_{\rm rat}$, where $X$ is a projective quotient variety and $p\in A^n(X\times X)$ is a projector. For a projective quotient variety $X=Y/G$, one readily proves (using Manin's identity principle) that there is an isomorphism
  \[  h(X)\cong h(Y)^G:=(Y,\Delta^G_Y,0)\ \ \ \hbox{in}\ \MM_{\rm rat}\ ,\]
  where $\Delta^G_Y$ denotes the idempotent ${1\over \vert G\vert}{\sum_{g\in G}}\Gamma_g$.  
  \end{remark}

 \subsection{Transcendental part of the motive}
 \label{ss:tr}
 
 \begin{theorem}[Kahn--Murre--Pedrini \cite{KMP}]\label{t2}    Let $S$ be any smooth projective surface, and let $h(X)\in\MM_{\rm rat}$ denote the Chow motive of $S$.
There exists a self-dual Chow--K\"unneth decomposition $\{\pi^i_S\}$ of $S$, with the property that there is a further splitting in orthogonal idempotents
  \[ \pi^2_S= \pi^{2,alg}_{S}+\pi^{2,tr}_{S}\ \ \hbox{in}\ A^2(S\times S)_{}\ .\]
  The action on cohomology is
  \[  (\pi^{2,alg}_{S})_\ast H^\ast(S,\QQ)= N^1 H^2(S,\QQ)\ ,\ \ (\pi^{2,tr}_{S})_\ast H^\ast(S,\QQ) = H^2_{tr}(S,\QQ)\ ,\]
  where the transcendental cohomology $H^2_{tr}(S,\QQ)\subset H^2(S,\QQ)$ is defined as the orthogonal complement of $N^1 H^2(S,\QQ)$ with respect to the intersection pairing. The action on Chow groups is
  \[ (\pi^{2,alg}_{S})_\ast A^\ast(S)_{}= N^1 H^2(S,\QQ)\ ,\ \ (\pi^{2,tr}_{S})_\ast A^\ast(S) = A^2_{AJ}(S)_{}\ .\]  
 This gives rise to a well-defined Chow motive
  \[ h_{tr}^2(S):= (S,\pi^{2,tr}_{S},0)\ \subset \ h(X)\ \ \in\MM_{\rm rat}\ ,\]
  the so-called {\em transcendental part of the motive of $S$}.
  \end{theorem}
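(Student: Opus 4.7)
The plan is to first build a self-dual Chow--K\"unneth decomposition of the surface, then isolate the algebraic piece of the middle summand by an explicit idempotent written in terms of divisors, and finally verify the two cohomological and Chow-theoretic identities by ``testing'' against the standard bases.

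First I would construct the Chow--K\"unneth projectors following Murre. Fix a point $p \in S$ and set $\pi^0_S := [p] \times [S]$ and $\pi^4_S := [S]\times [p]$ (these are obviously orthogonal idempotents realising $H^0$ and $H^4$). To produce $\pi^1_S$, pick a symplectic basis for $H^1(S,\QQ)$ realised by divisors on $S$ coming from the Albanese/Picard varieties, and use Poincar\'e duality on the abelian variety side to write down a correspondence whose cohomology class is the K\"unneth projector onto $H^1$. Set $\pi^3_S := (\pi^1_S)^t$. After a symmetrisation step one obtains the self-duality $(\pi^i_S)^t = \pi^{4-i}_S$, and $\pi^2_S := \Delta_S - \pi^0_S - \pi^1_S - \pi^3_S - \pi^4_S$ is automatically a self-dual idempotent orthogonal to the others, acting as the identity on $H^2$.

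Next I would split $\pi^2_S$. Choose a basis $D_1,\ldots,D_\rho$ of $N^1(S)_\QQ$ and let $(a_{ij})$ be the inverse of the intersection matrix $\bigl(D_i \cdot D_j\bigr)$. Define
\[
\pi^{2,alg}_S \;:=\; \sum_{i,j} a_{ij}\, [D_i \times D_j] \;\in\; A^2(S\times S).
\]
A direct computation using $p_{2*}\bigl((D_i\times D_j)\cdot p_1^\ast \beta\bigr) = (D_i \cdot \beta)\,D_j$ shows that $\pi^{2,alg}_S$ is idempotent and that its cohomological action is projection onto $N^1 H^2(S,\QQ)$ (this is essentially the Gram--Schmidt identity for the intersection pairing on $N^1$). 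Orthogonality of $\pi^{2,alg}_S$ with $\pi^0_S, \pi^1_S, \pi^3_S, \pi^4_S$ follows because $\pi^{2,alg}_S$ is supported on $(\text{divisor})\times(\text{divisor})$ and these classes pair to zero with the $0$- and $1$-dimensional supports of the other projectors. Then $\pi^{2,tr}_S := \pi^2_S - \pi^{2,alg}_S$ is automatically an idempotent, orthogonal to everything else, whose cohomological action is the projection onto the orthogonal complement of $N^1 H^2(S,\QQ)$, i.e.\ onto $H^2_{tr}(S,\QQ)$.

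For the action on Chow groups, the key observation is that $\pi^{2,alg}_S$ is supported on a product of divisors, so for any $0$-cycle $\alpha \in A^2(S)$ the computation above gives $(\pi^{2,alg}_S)_\ast \alpha = \sum a_{ij}(D_i \cdot \alpha)\, D_j = 0$ since $D_i \cdot \alpha$ vanishes for dimension reasons. Conversely, on $A^0(S)$ the projector $\pi^{2,alg}_S$ hits the $N^1$-part as designed. Combined with Murre's verification that $(\pi^2_S)_\ast$ coincides with the identity on $A^2_{AJ}(S)$ and has image $A^2_{AJ}(S)$ inside $A^2(S)$, this yields the claimed Chow-theoretic action of both $\pi^{2,alg}_S$ and $\pi^{2,tr}_S$.

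The main obstacle I would expect is the careful verification of orthogonality of the full system $\{\pi^0_S, \pi^1_S, \pi^{2,alg}_S, \pi^{2,tr}_S, \pi^3_S, \pi^4_S\}$ at the level of rational equivalence (not just cohomologically): the cross-terms $\pi^1_S \circ \pi^{2,alg}_S$ etc.\ require knowing that correspondences supported on products of subvarieties of the wrong dimensions vanish in $A^\ast(S\times S)$. This is where one genuinely needs to invoke the detailed construction of $\pi^1_S,\pi^3_S$ via the Albanese variety (and the Bloch--Srinivas type arguments that compare supports of cycles with their action on Chow groups), which is the technical heart of \cite{KMP}.
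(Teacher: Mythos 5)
Your proposal follows essentially the same route as the paper, whose proof simply invokes Propositions 7.2.1 and 7.2.3 of Kahn--Murre--Pedrini \cite{KMP} and records that $\pi^{2,alg}_S=\sum_j D_j\times D_j^\prime$ for a basis $D_1,\ldots,D_\rho$ of the N\'eron--Severi group and a dual basis $D_j^\prime$ --- which is exactly your $\sum_{i,j}a_{ij}\,D_i\times D_j$. Like the paper, you ultimately defer the only genuinely delicate point (orthogonality against $\pi^1_S,\pi^3_S$ modulo rational equivalence, which is not a pure support/dimension argument --- e.g. $\pi^1_S\circ\pi^{2,alg}_S=\sum_j D_j\times(\pi^1_S)_\ast D_j^\prime$ need not vanish for dimension reasons alone) to the detailed construction in \cite{KMP}, so apart from minor slips (the description of $\pi^1_S$ via ``divisors realising $H^1$'', and $A^0(S)$ where $A^1(S)$ is meant) the outline matches the paper's argument.
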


\begin{proof} Let $\{\pi^i_S\}$ be a Chow--K\"unneth decomposition as in \cite[Proposition 7.2.1]{KMP}. The assertion then follows from \cite[Proposition 7.2.3]{KMP}. The projector $\pi^{2,alg}_S$ is of the form
  \[  \pi^{2,alg}_S= \sum_{j=1}^\rho  D_j\times D_j^\prime\ ,\]
  where $D_1,\ldots,D_\rho$ is a basis for the N\'eron--Severi group of $S$, and $D_1^\prime,\ldots, D_\rho^\prime$ is a dual basis.
\end{proof}

 \section{Main result}
 
 \begin{theorem}\label{main} Let $S$ be a Garbagnati surface. 
 Let $T_1,\ldots,T_m$ be the associated K3 surfaces where $m=p_g(S)$. There is an isomorphism
   \[   A^2_{hom}(S)\  \xrightarrow{\cong}\ \bigoplus_{j=1}^m A^2_{hom}(T_j)  \ .\]
   
 Moreover, one can choose Chow--K\"unneth decompositions for $S$ and $T_j$ such that there is an isomorphism of Chow motives
   \[ h^2_{tr}(S)\ \cong\   \bigoplus_{j=1}^m h^2_{tr}(T_j)\ \ \ \hbox{in}\ \MM_{\rm rat}\ .\]
    \end{theorem}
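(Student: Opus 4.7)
The plan is to exploit the Galois action of $G = (\ZZ/2\ZZ)^2$ on $S$ coming from the bidouble cover $S \to \PP^2$, and to match the $G$-isotypic pieces of $A^2_{hom}(S)$ and $h^2_{tr}(S)$ with the analogous objects for the intermediate quotients. Let $\sigma_1, \sigma_2, \sigma_3$ denote the three non-trivial involutions (with $\sigma_3 = \sigma_1\sigma_2$), let $W_i := S/\langle \sigma_i \rangle$ be the three intermediate double covers of $\PP^2$ (each a projective quotient variety in the sense of \S\ref{ssq}), and let $\mu_i: T_i \to W_i$ be minimal resolutions. A case-by-case inspection of the branch data in Definition \ref{def} shows that each $W_i$ is branched along a pairwise sum of the original branch divisors: exactly $m = p_g(S)$ of these sums have total degree $6$ (yielding K3 surfaces $T_i$), while the remaining $3-m$ have degree $\le 4$ (yielding rational $T_i$, namely blown-up Hirzebruch or del Pezzo surfaces).

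First I would address the Chow-group statement. The four characters of $G$ give orthogonal idempotents $e_0,\ldots,e_3 \in \QQ[G]$, which I realize as orthogonal projectors $\Pi_j \in A^2(S \times S)$ with $\sum_j \Pi_j = \Delta_S$. Using Proposition \ref{quot}, the $G$-invariant part $(\Pi_0)_\ast A^2_{hom}(S)$ equals $A^2_{hom}(S/G)$, which vanishes because $S/G$ is birational to $\PP^2$. For each non-trivial character $\chi_i$ (the unique one that is trivial on $\sigma_i$), the $\chi_i$-isotypic part $(\Pi_i)_\ast A^2_{hom}(S)$ therefore coincides with the $\langle\sigma_i\rangle$-invariants, and hence with $A^2_{hom}(W_i)$. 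As $W_i$ has only rational singularities, $\mu_i$ induces $A^2_{hom}(W_i) = A^2_{hom}(T_i)$. This vanishes for the $(3-m)$ rational quotients and equals $A^2_{hom}(T_j)$ for each of the $m$ K3 quotients. Summing over $i=1,2,3$ yields the desired isomorphism of Chow groups.

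For the motivic statement, I would combine each $\Pi_i$ with the transcendental projector $\pi^{2,tr}_S$ from Theorem \ref{t2} and transport the resulting projector to $T_i$ via the correspondence $\Gamma_i \in A^2(S \times T_i)$ obtained from the two graphs $\Gamma_{q_i}$ (with $q_i: S \to W_i$ the quotient map) and $\Gamma_{\mu_i}$. The key identities to verify are $\Gamma_i \circ \Gamma_i^t = \Pi_i \circ \pi^{2,tr}_S$ (up to a rescaling absorbing the degree of $q_i$), $\Gamma_i^t \circ \Gamma_i = \pi^{2,tr}_{T_i}$, and $\Gamma_j^t \circ \Gamma_i = 0$ for $i \ne j$; together they assemble a motivic direct-sum decomposition $h^2_{tr}(S) \cong \bigoplus_i h^2_{tr}(T_i)$, with the rational-$T_i$ summands disappearing because $\pi^{2,tr}_{T_i} = 0$ for rational $T_i$ (both $H^2_{tr}$ and $A^2_{AJ}$ vanish, so Theorem \ref{t2} gives $\pi^{2,tr}_{T_i} = 0$). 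I expect the main obstacle to be precisely this compatibility check --- especially the orthogonality $\Gamma_j^t \circ \Gamma_i = 0$ for $i \ne j$ and the precise identification of $\Gamma_i^t \circ \Gamma_i$ with $\pi^{2,tr}_{T_i}$ rather than some other idempotent with the same cohomological action --- which requires carefully leveraging the orthogonality of the group-algebra idempotents $e_i$ together with the explicit form of $\pi^{2,alg}_S$ given in Theorem \ref{t2}.
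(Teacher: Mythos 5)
Your proposal follows essentially the same route as the paper: decompose $A^2_{hom}(S)$ and the motive according to the four characters of the bidouble-cover group $(\ZZ/2\ZZ)^2$, identify each nontrivial-character piece with the corresponding intermediate quotient $S/\langle\sigma_i\rangle$ and its resolution, and kill the invariant piece (birational to $\PP^2$) as well as the pieces coming from the rational quotients, the count of K3 quotients matching $p_g(S)$ case by case. The only difference is cosmetic bookkeeping in the motivic step: instead of verifying your exact projector identities such as $\Gamma_i^t\circ\Gamma_i=\pi^{2,tr}_{T_i}$, the paper splits off $h^0$, $h^4$ and an $h^2_{alg}$ built from a N\'eron--Severi basis adapted to the eigenspace decomposition, so that the decomposition of $h^2_{tr}(S)$ into the four character pieces is induced directly --- but the underlying idea is the same.
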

   
 \begin{proof} The statement being birationally invariant, we may assume $S$ is a bidouble plane as described in Definition \ref{def}. From the construction of $S$ as a bidouble plane, it follows that there are three commuting involutions $\sigma_i\in\aut(S)$, $i=1,2,3$, such that 
   \[\sigma_3=\sigma_1\circ\sigma_2=\sigma_2\circ\sigma_1\ .\]
 Let $Q_i:=S/\langle\sigma_i\rangle$ denote the quotients, so there is a commuting diagram (where the arrows are the various quotient maps)
  \[ \begin{array}[c]{ccccc}
    &&S&&\\
    &\swarrow&\downarrow&\searrow&\\
    Q_1&&Q_2&&Q_3\\
     &\searrow&\downarrow&\swarrow&\\ 
     &&\PP^2&&\\
     \end{array}\]
   Define $A^2(S)^{\pm \mp}$ as the subgroup of $A^2(S)$ where $(\sigma_1,\sigma_2)$ acts as $(\pm1,\mp1)$. There is a decomposition
   \[ A^2_{hom}(S)= A^2_{hom}(S)^{++}\oplus    A^2_{hom}(S)^{+-}  \oplus A^2_{hom}(S)^{-+}\oplus  A^2_{hom}(S)^{--} \ .\]
   The summand $A^2_{hom}(S)^{++}$, being isomorphic to 
   $A^2_{hom}(\PP^2)$, is zero, while the three other summands are isomorphic to $A^2_{hom}(Q_i)$, $i=1,2,3$. Thus we get a natural isomorphism
    \[ A^2_{hom}(S)\ \cong \      A^2_{hom}(Q_1)^{}  \oplus A^2_{hom}(Q_2)^{}\oplus  A^2_{hom}(Q_3)^{} \ .\]
    
    In case the surface $S$ is of type G3, there are 3 associated K3 surfaces $T_j$ and one has natural isomorphisms
      \[ A^2_{hom}(Q_j)\ \cong\ A^2_{hom}(T_j)\] 
      (indeed, the $T_j$ are resolutions of singularities of the $Q_j$, and cyclic quotient singularities
    can be resolved by strings of rational curves, cf. \cite{BPV}; the isomorphism then follows from \cite[Proposition 1.7]{42}). This proves the statement for Chow groups.
    
  In case $S$ is of type G2a or G2b, two of the $Q_j$ (say $Q_1$ and $Q_2$) are birational to a K3 surface, and the third surface $Q_3$ is rational, hence $A^2_{hom}(Q_3)=0$. This proves the statement for Chow groups for cases G2a, G2b.
    
 Finally, for the case G1, one of the $Q_j$ (say $Q_1$) is birational to a K3 surface, and the other two are rational; the statement for Chow groups follows similarly.
 
 The statement for motives is proven along the same lines, by exploiting the bidouble cover structure: we can define motives $h(S)^{\pm \mp}\in\MM_{\rm rat}$ by setting
   \[  \begin{split}
                 h(S)^{++}&:=(S,{1\over 4}(\Delta_S+\Gamma_{\sigma_1})\circ(\Delta_S+\Gamma_{\sigma_2}),0)\ ,\\
                  h(S)^{+-}&:=(S,{1\over 4}(\Delta_S+\Gamma_{\sigma_1})\circ(\Delta_S-\Gamma_{\sigma_2}),0)\ ,\\ 
                  h(S)^{-+}&:=(S,{1\over 4}(\Delta_S-\Gamma_{\sigma_1})\circ(\Delta_S+\Gamma_{\sigma_2}),0)\ ,\\ 
                  h(S)^{--}&:=(S,{1\over 4}(\Delta_S-\Gamma_{\sigma_1})\circ(\Delta_S-\Gamma_{\sigma_2}),0)\ .\\ 
                  \end{split}\]
         (It is readily checked the given cycles are idempotents and so define motives.)
                  
  This gives a decomposition
    \[ h(S)=h(S)^{++}\oplus h(S)^{+-}\oplus h(S)^{-+}\oplus h(S)^{--}\ \ \ \hbox{in}\ \MM_{\rm rat}\ .\]                
  Defining $h^0(S)$ and $h^4(S)$ by the choice of a zero-cycle invariant under $\sigma_1$ and $\sigma_2$, we get a similar decomposition for $h^2(S)=h(S)-h^0(S)-h^4(S)$. Next, we can choose a basis $D_1,\ldots,D_\rho$ for the N\'eron--Severi group of $S$ such that
 $D_1,\ldots,D_{\rho_1}$ are of type $++$ (i.e. they are invariant under $\sigma_1$ and $\sigma_2$), $D_{\rho_1+1},\ldots, D_{\rho_2}$ are of type $+-$,  $D_{\rho_2+1},\ldots, D_{\rho_3}$ are of type $-+$ , and the remaining divisors are of type $--$. The dual basis decomposes similarly (a divisor $D_j$ and its dual $D_j^\prime$ are of the same type), and so we get a decomposition
  \[ h^2_{alg}(S)=h^2_{alg}(S)^{++}\oplus h^2_{alg}(S)^{+-}\oplus h^2_{alg}(S)^{-+}\oplus h^2_{alg}(S)^{--}\ \ \ \hbox{in}\ \MM_{\rm rat}\ .\]    
 It follows that there is an induced decomposition
          \[ h^2_{tr}(S)=h^2_{tr}(S)^{++}\oplus h^2_{tr}(S)^{+-}\oplus h^2_{tr}(S)^{-+}\oplus h^2_{tr}(S)^{--}\ \ \ \hbox{in}\ \MM_{\rm rat}\ .\]       
          The first summand $h^2_{tr}(S)^{++}$ is the transcendental part of the motive of $\PP^2$, which is zero. For surfaces $S$ of type G3, the remaining three summands are isomorphic to $h^2_{tr}(S/\langle\sigma_j\rangle)=h^2_{tr}(T_j)$ where $T_j$ is an associated K3 surface. In cases G2a and G2b, the  
          summand $h^2_{tr}(S)^{--}$ corresponds to $h^2_{tr}(Q_3)$, which is zero as $Q_3$ is a
           rational surface. Finally, in case G1 there are two rational surfaces $Q_2$ and $Q_3$ and thus 
           \[   h^2_{tr}(S)=h^2_{tr}(S)^{+-}=h^2_{tr}(Q_1)=h^2_{tr}(T_1)\ \ \ \hbox{in}\ \MM_{\rm rat}\ .\]
           This proves the motivic statement.
                   \end{proof}

 \section{Some consequences}

 \subsection{Voisin conjecture}
 
 \begin{conjecture}[Voisin \cite{V9}]\label{conj} Let $S$ be a smooth projective surface. Let $n$ be an integer strictly larger than the geometric genus $p_g(S)$. Then for any $0$-cycles $a_1,\ldots,a_n\in A^2_{AJ}(S)_{\ZZ}$, one has
  \[ \sum_{\sigma\in\Sy_n} \hbox{sgn}(\sigma) a_{\sigma(1)}\times\cdots\times a_{\sigma(n)}=0\ \ \ \hbox{in}\ A^{2n}(S^n)_{\ZZ}\ .\]
  (Here $\Sy_n$ is the symmetric group on $n$ elements, and $ \hbox{sgn}(\sigma)$ is the sign of the permutation $\sigma$.
  The notation $a_1\times\cdots\times a_n$ is shorthand for the $0$-cycle $(p_1)^\ast(a_1)\cdot (p_2)^\ast(a_2)\cdots (p_n)^\ast(a_n)$ on 
  $S^n$, where the $p_j\colon S^n\to S$ are the various projections.)
 \end{conjecture}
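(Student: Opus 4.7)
The plan is to attack Voisin's conjecture via the motivic framework of Theorem \ref{t2} together with Kimura--O'Sullivan finite-dimensionality, with the caveat that the conjecture is open in full generality; what follows is the natural path one would try.

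\textbf{Motivic reformulation.} Since each $a_i\in A^2_{AJ}(S)$ is annihilated by every Chow--K\"unneth projector $\pi^j_S$ with $j\neq 2$ and by $\pi^{2,alg}_S$, when viewed as a morphism $\mathbf{1}\to h(S)$ in $\MM_{\rm rat}$ it factors through $h^2_{tr}(S)$. The antisymmetric cycle
$$\sum_{\sigma\in\Sy_n}\hbox{sgn}(\sigma)\,a_{\sigma(1)}\times\cdots\times a_{\sigma(n)}\ \in\ A^{2n}(S^n)$$
is then the realization of the morphism $\mathbf{1}\to h(S^n)$ obtained by antisymmetrizing $a_1\otimes\cdots\otimes a_n$ inside $h^2_{tr}(S)^{\otimes n}$. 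It therefore suffices (modulo a separate torsion argument, since the conjecture is stated integrally) to show
$$\wedge^n h^2_{tr}(S)\ =\ 0\ \ \ \hbox{in}\ \MM_{\rm rat}\ \ \ \hbox{for}\ n>p_g(S)\ .$$

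\textbf{Finite-dimensionality and the sharp bound.} Because the cohomological realization of $h^2_{tr}(S)$ lives in the even degree $2$, one expects $h^2_{tr}(S)$ to be evenly finite-dimensional in the sense of Kimura--O'Sullivan, which would yield $\wedge^N h^2_{tr}(S)=0$ for some $N$; this would settle the conjecture for $n\gg 0$ but not for the sharp bound $n>p_g$. Bridging the gap requires the deeper structural input predicted by Bloch--Beilinson: on the Hodge side, $A^2_{AJ}(S)$ should be governed only by $H^{2,0}\oplus H^{0,2}$, and the antisymmetrization should effectively ``see'' only the $H^{2,0}$ part, of dimension $p_g$. Concretely, one wants to exhibit a sub-motive $N\subset h^2_{tr}(S)$ of Kimura dimension exactly $p_g$ through which every morphism $\mathbf{1}\to h^2_{tr}(S)$ coming from $A^2_{AJ}(S)$ factors, and then deduce $\wedge^{p_g+1}N=0$ from evenly finite-dimensionality of $N$.

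\textbf{Main obstacle.} The chief difficulty is precisely this refined finite-dimensionality with the \emph{correct} Kimura dimension $p_g$: it is a strong form of the Bloch--Beilinson--Murre conjectures and is not established for arbitrary smooth projective surfaces of general type. A realistic strategy is therefore conditional and piecewise: one (i) first establishes Kimura finite-dimensionality for a tractable class of surfaces (for instance those dominated by products of curves, by Guletski\u{\i}--Pedrini), and (ii) then uses additional geometric structure to decompose $h^2_{tr}(S)$ into summands of the expected dimension $p_g$. This is precisely the pattern exploited in the present paper: by Theorem \ref{main} the transcendental motive of a Garbagnati surface splits into transcendental motives of K3 surfaces, where the $n=2$ instance has been proven by Voisin, reducing the conjecture for these surfaces to the sharper statement for K3s.
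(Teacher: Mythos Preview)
The statement you were asked to prove is labelled \emph{Conjecture} in the paper, and the paper does not prove it: it is Voisin's open conjecture, stated as such. There is therefore no ``paper's own proof'' to compare your attempt against. What the paper does prove is the special case (Corollary~\ref{cor0}) for Garbagnati surfaces of type G1, G2b, G3, and you correctly identify the mechanism used there in your final paragraph.

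That said, your sketch contains a genuine error in the motivic reformulation. You write that it suffices to show
\[
\wedge^n h^2_{tr}(S)=0\ \ \hbox{in}\ \MM_{\rm rat}\ \ \hbox{for}\ n>p_g(S).
\]
This is far too strong and is false already for a K3 surface: there $p_g=1$, but $\wedge^2 h^2_{tr}(S)$ has nonzero cohomological realization $\wedge^2 H^2_{tr}(S,\QQ)$ (of dimension $\binom{\dim H^2_{tr}}{2}$), so the motive cannot vanish. The correct restatement, which the paper uses in the proof of Corollary~\ref{cor0}, is the weaker
\[
A_0\bigl(\wedge^n h^2_{tr}(S)\bigr)=0\ \ \hbox{for}\ n>p_g(S),
\]
i.e.\ only the degree-zero Chow group of the wedge motive is required to vanish, not the motive itself. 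Your later attempt to rescue the argument by passing to a sub-motive $N$ of Kimura dimension $p_g$ is closer in spirit, but note that even this would force $\dim H^2_{tr}\le p_g$ on the sub-motive, which cannot accommodate all of $A^2_{AJ}(S)$ in general; the point is really about the \emph{Chow-theoretic} vanishing, not a motivic one.
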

 
This conjecture is a particular instance of the Bloch--Beilinson conjectures.
For surfaces of geometric genus $0$, conjecture \ref{conj} reduces to Bloch's conjecture \cite{B}. As for geometric genus $1$, Voisin's conjecture is still open for a general K3 surface;
examples of surfaces of geometric genus $1$ verifying the conjecture are given in \cite{V9}, \cite{16.5}, \cite{19}, \cite{21}. Examples
of surfaces with geometric genus strictly larger than $1$ verifying the conjecture are given in \cite{32}, \cite{33}, \cite{LV}.

 \begin{corollary}\label{cor0} Let $S$ be a surface of type G1, G2b or G3. Then Voisin's conjecture is true for $S$.
  \end{corollary}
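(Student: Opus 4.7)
The plan is to combine the motivic splitting $h^2_{tr}(S)\cong\bigoplus_{j=1}^m h^2_{tr}(T_j)$ of Theorem \ref{main} with Voisin's conjecture at $n=2$ for each associated K3 surface $T_j$, via a pigeonhole argument on multi-indices.

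First, the passage from rational to integer coefficients is harmless: Garbagnati surfaces have $q=0$ (they are bidouble covers of $\PP^2$), so $A^2_{hom}=A^2_{AJ}$, and by Roitman's theorem $A^2_{hom}(S^n)_\ZZ$ is torsion-free; hence it suffices to prove the vanishing with $\QQ$-coefficients. Let $\iota_j,\pi_j$ denote the motivic inclusion and projection realizing $h^2_{tr}(T_j)\subset h^2_{tr}(S)$ from Theorem \ref{main}. Each $a_i\in A^2_{AJ}(S)$ decomposes as
\[ a_i=\sum_{j=1}^m(\iota_j)_\ast b_{i,j},\qquad b_{i,j}:=(\pi_j)_\ast a_i\in A^2_{AJ}(T_j).\]
Expanding by multilinearity yields
\[ \sum_\sigma\hbox{sgn}(\sigma)\,a_{\sigma(1)}\times\cdots\times a_{\sigma(n)}=\sum_{(j_1,\ldots,j_n)}(\iota_{j_1}\times\cdots\times\iota_{j_n})_\ast\Bigl(\sum_{\sigma\in\Sy_n}\hbox{sgn}(\sigma)\,b_{\sigma(1),j_1}\times\cdots\times b_{\sigma(n),j_n}\Bigr),\]
with the outer sum over $(j_1,\ldots,j_n)\in\{1,\ldots,m\}^n$.

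Next, apply the pigeonhole principle: since $n>m$, every multi-index $(j_1,\ldots,j_n)$ has a repetition $j_p=j_q=j$ at two distinct positions $p\neq q$. Partitioning $\Sy_n$ into the cosets of the transposition $(p,q)$ and pairing $\sigma$ with $\sigma\cdot(p,q)$, the inner sum reorganises so that positions $p$ and $q$ carry the antisymmetrization $b\times b'-b'\times b$ on $T_j\times T_j$, while the remaining $n-2$ positions act as bystanders. The entire problem is reduced to Voisin's original conjecture at $n=2$ for each associated K3 surface, namely
\[ b\times b'-b'\times b=0\quad\hbox{in}\ A^4(T_j\times T_j)\quad\hbox{for all }b,b'\in A^2_{AJ}(T_j).\]

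The principal obstacle is this final step: identifying the K3 surfaces $T_j$ for each of types G1, G2b, G3 and invoking Voisin's conjecture for them. For G3, each $T_j$ is a resolution of a double cover of $\PP^2$ branched along the union of two (out of the three) cubics, and so is covered directly by \cite{V9}; for G1 and G2b, one identifies the $T_j$ concretely from the branching data in Definition \ref{def} and matches them against the appropriate K3 cases in \cite{V9,16.5,19,21}. The exclusion of G2a reflects that the K3 associated to a bidouble cover branched along a quintic and two lines is, at present, outside the range of K3 surfaces for which Voisin's conjecture is established.
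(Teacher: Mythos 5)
Your proof is correct and follows essentially the same route as the paper: reduce to $\QQ$-coefficients via Roitman, feed the splitting of Theorem \ref{main} into the antisymmetrized product, pigeonhole on the $m$ factors since $n>m$, and conclude from known cases of Voisin's conjecture for the associated K3 surfaces. The only real difference is bookkeeping: the paper packages the reduction via Kimura's wedge product of motives, restating the conjecture as $A_0\bigl(\wedge^n h^2_{tr}(S)\bigr)=0$ and decomposing $\wedge^n\bigl(\bigoplus_j h^2_{tr}(T_j)\bigr)$ into tensor products in which some $\wedge^{n_j}$ with $n_j\ge 2$ occurs, whereas your explicit multilinear expansion and the pairing of $\sigma$ with $\sigma\cdot(p\,q)$ is the cycle-level shadow of that argument; it is if anything more elementary, since it only needs bilinearity of the external product rather than the identification of $A_0$ of a tensor product of motives with the tensor product of the $A_0$'s that the paper invokes. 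The one place you stop short is the final step: for G1 and G2b you say the $T_j$ can be ``identified and matched'' against the literature but do not do it. Concretely, for G1 the unique K3 quotient is birational to the double plane branched along the union of the two cubics, so it is covered by \cite[Theorem 3.4]{V9} exactly as in the G3 case; for G2b the two K3 quotients are double planes branched along the union of the quartic and one of the two quadrics, covered by \cite[Proposition 14]{16.5}; the references \cite{19} and \cite{21} are not needed here. Your explanation of why G2a is excluded (the sextic quintic-plus-line case of Voisin's conjecture being open) agrees with the paper's remark.
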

  
  \begin{proof} Thanks to Roitman's theorem, Voisin's conjecture is equivalent to the version with $\QQ$-coefficients.
  Voisin's conjecture (for an arbitrary surface $S$) can then be succinctly restated as
    \[  A_0\bigl(\wedge^n h^2_{tr}(S)\bigr)=0\ \ \ \hbox{for\ all\ }n>p_g(S)\ ,\]  
    where the wedge product of a motive is as defined in \cite[Definition 3.5]{Kim}. 
    
Let $S$ now be a Garbagnati surface, and let $T_1,\ldots,T_m$ denote the K3 surfaces associated to $S$. Invoking Theorem \ref{main}, we find that Voisin's conjecture for $S$ is implied by Voisin's conjecture for $T_1,\ldots,T_m$. Indeed, we have
  \[ \begin{split}     A_0\bigl(\wedge^n h^2_{tr}(S)\bigr)&=    A_0\Bigl(   \wedge^n \bigl( \bigoplus_{j=1}^m h^2_{tr}(T_j)\bigr)\Bigr)\\
                                                                                   &=  A_0\Bigl( \bigoplus_{n_1+\cdots+n_m=n}  \wedge^{n_1} h^2_{tr}(T_1)\otimes  \cdots\otimes \wedge^{n_m} h^2_{tr}(T_m)   \Bigr)\\
                                                                                   &= \bigoplus_{n_1+\cdots+n_m=n} A_0(  \wedge^{n_1} h^2_{tr}(T_1))\otimes\cdots\otimes A_0(  \wedge^{n_m} h^2_{tr}(T_m)) \ .  \\
                                                                                   \end{split}\]
                                                                    Assume now that $n>m:=p_g(S)$. Then in each summand there is an $n_j>1$. Hence assuming Voisin's conjecture for all the $T_j$, each summand vanishes, and so Voisin's conjecture holds for $S$ as claimed.
                                                                    
To finish the proof, it remains to observe that Voisin's conjecture is known for K3 surfaces obtained by desingularizing double planes branched along the union of two cubics \cite[Theorem 3.4]{V9}, and also for K3 surfaces obtained from double planes branched along a quartic and a quadric \cite[Proposition 14]{16.5}.                                                                                  
 \end{proof}
 
 \begin{remark} We do not know whether Voisin's conjecture holds for surfaces of type G2a. The reason is that Voisin's conjecture is not yet known for
 K3 double planes branched along a quintic and a line (and the method of \cite[Theorem 3.4]{V9} and \cite[Proposition 14]{16.5} seems ill-suited for this case).
 \end{remark}

 \subsection{Motivic Torelli}
 
 \begin{corollary}\label{cor1} Let $S$ and $S^\prime$ be two Garbagnati surfaces, and assume $S$ and $S^\prime$ are isometric (i.e., there is an isomorphism of $\QQ$-vector spaces $H^2_{tr}(S,\QQ)\cong H^2_{tr}(S^\prime,\QQ)$ compatible with Hodge structures and cup product). Then there is an isomorphism of Chow motives
    \[ h^2_{tr}(S)\ \cong\ h^2_{tr}(S^\prime)\ \ \ \hbox{in}\ \MM_{\rm rat}\ .\]
  \end{corollary}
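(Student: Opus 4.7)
The strategy is to reduce the claim to the motivic Torelli theorem for complex projective K3 surfaces, by invoking Theorem \ref{main} to transfer the problem from Garbagnati surfaces to their associated K3 surfaces.

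\emph{Step 1.} Applying Theorem \ref{main} to $S$ and to $S^\prime$ yields canonical isomorphisms
\[ h^2_{tr}(S) \cong \bigoplus_{j=1}^m h^2_{tr}(T_j), \qquad h^2_{tr}(S^\prime) \cong \bigoplus_{k=1}^{m^\prime} h^2_{tr}(T_k^\prime) \]
in $\MM_{\rm rat}$, where the $T_j$, $T_k^\prime$ are the associated K3 surfaces. Taking Betti realizations produces orthogonal decompositions of $H^2_{tr}(S,\QQ)$ and $H^2_{tr}(S^\prime,\QQ)$ into the transcendental lattices of the K3 summands.

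\emph{Step 2.} I would argue that the hypothesized Hodge isometry $H^2_{tr}(S,\QQ)\cong H^2_{tr}(S^\prime,\QQ)$ forces $m=m^\prime$ and, after a suitable permutation $\tau$, restricts to Hodge isometries $H^2_{tr}(T_j,\QQ)\cong H^2_{tr}(T^\prime_{\tau(j)},\QQ)$ for each $j$. The key inputs are: the transcendental Hodge structure of any complex K3 surface is simple as a rational polarizable Hodge structure (a consequence of Zarhin's theorem); the category of polarizable $\QQ$-Hodge structures is semisimple, so decomposition into simples is unique up to permutation and isomorphism; and the orthogonality of the K3 summands inside $H^2_{tr}(S,\QQ)$ (they arise as the $(\pm,\pm)$-eigenspaces for commuting involutions preserving the cup product) together with non-degeneracy of the form force the induced matching to be an isometry on each simple factor.

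\emph{Step 3.} I would then invoke the motivic Torelli theorem for complex projective K3 surfaces due to Buskin and Huybrechts: any Hodge isometry between the rational transcendental lattices of two K3 surfaces is induced by an algebraic correspondence that lifts to an isomorphism of their transcendental Chow motives in $\MM_{\rm rat}$. Applying this to each matched pair $(T_j,T^\prime_{\tau(j)})$ and taking the direct sum yields an isomorphism $\bigoplus_j h^2_{tr}(T_j)\cong \bigoplus_j h^2_{tr}(T^\prime_{\tau(j)})$, which via Step 1 produces the desired isomorphism $h^2_{tr}(S)\cong h^2_{tr}(S^\prime)$.

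The main obstacle lies in Step 3: lifting a cohomological Hodge isometry to a Chow-motivic isomorphism. This is precisely the content of the motivic Torelli theorem for K3 surfaces, whose proof rests on Buskin's theorem that Hodge isometries of K3 transcendental lattices are realised by algebraic cycles, combined with the fact that a morphism of K3 transcendental Chow motives inducing a cohomological isomorphism is automatically a motivic isomorphism. Step 2 is more elementary but needs some care when several of the summands $T_j$ happen to share the same transcendental Hodge structure, in which case the permutation $\tau$ is non-canonical.
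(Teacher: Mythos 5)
Your proposal is correct and follows essentially the same route as the paper: decompose via Theorem \ref{main}, use indecomposability (simplicity) of the transcendental Hodge structure of a K3 surface to match the summands under the given isometry, and then apply Huybrechts' motivic Torelli result (resting on Buskin's theorem) to each matched pair of K3 surfaces before reassembling. The only cosmetic difference is that you spell out the semisimplicity/permutation bookkeeping a bit more explicitly than the paper does.
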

  
  \begin{proof} This is a direct transplantation of the analogous result for K3 surfaces. 
  
  Let $T_j$ and $T_j^\prime$ denote the K3 surfaces associated to $S$ resp. to $S^\prime$, for $j=1,\ldots,m:=p_g(S)=p_g(S^\prime)$. The image of $H^2_{tr}(T_j,\QQ)\subset H^2_{tr}(S,\QQ)$ under the isometry $\phi$ is a sub-Hodge structure of $H^2_{tr}(S^\prime,\QQ)$. Since the transcendental cohomology of K3 surfaces are 
  indecomposable Hodge structures, the image must be equal to one of the $H^2_{tr}(T^\prime_j,\QQ)$, and so we may assume that $\phi\bigl(   H^2_{tr}(T_j,\QQ)\bigr) = H^2_{tr}(T^\prime_j,\QQ)$. The inclusion $H^2_{tr}(T_j,\QQ)\subset H^2_{tr}(S,\QQ)$ is given by the composition
    \[  H^2_{tr}(T_j,\QQ)\ \xleftarrow{\cong}\ H^2_{tr}(S/\langle\sigma_j\rangle,\QQ)\ \xrightarrow{(p_j)^\ast}\ H^2_{tr}(S,\QQ)\ ,\]
    where the left arrow is induced by a resolution of singularities. This shows that the inclusion $H^2_{tr}(T_j,\QQ)\subset H^2_{tr}(S,\QQ)$
    is compatible with cup product. Hence, the isometry $\phi$ decomposes as a sum
    \[ \phi=\sum_{j=1}^m \phi_j\colon\ \ \bigoplus_{j=1}^m H^2_{tr}(T_j,\QQ)\ \to\ \bigoplus_{j=1}^m H^2_{tr}(T^\prime_j,\QQ) \ ,\]
    where each $\phi_j$ is an isometry. Huybrechts' result \cite{Huy2} then guarantees that there are isomorphisms of Chow motives
    \[   h^2_{tr}(T_j)\ \cong\ h^2_{tr}(T^\prime_j)    \ \ \ \hbox{in}\ \MM_{\rm rat}\ \ \ (j=1,\ldots,m)\ .\]
    Applying Theorem \ref{main}, it follows that there is an isomorphism
    \[ h^2_{tr}(S)\ \cong\ h^2_{tr}(S^\prime)    \ \ \ \hbox{in}\ \MM_{\rm rat}    \ .\]
     \end{proof}

 \subsection{Finite-dimensionality}
 
 \begin{corollary}\label{cor2} The following surfaces have finite-dimensional motive (in the sense of Kimura \cite{Kim}):
 
 \begin{enumerate}
 
 \item Surfaces of type G1 with $\dim H^2_{tr}(S,\QQ)\le 3$;
 
 \item Surfaces of type G2 with $\dim H^2_{tr}(S,\QQ)\le 5$; 
 
 \item Surfaces of type G3 with $\dim H^2_{tr}(S,\QQ)\le 7$.
 \end{enumerate}
 \end{corollary}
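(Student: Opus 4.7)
My plan is to invoke Theorem \ref{main} in order to reduce the finite-dimensionality of $h(S)$ to that of the associated K3 surfaces $T_1,\ldots,T_m$. First, taking the Betti realization of the motivic isomorphism in Theorem \ref{main} gives
\[ \dim H^2_{tr}(S,\QQ)\ =\ \sum_{j=1}^m \dim H^2_{tr}(T_j,\QQ)\ . \]
Since each $T_j$ is a K3 surface with $h^{2,0}(T_j)=1$, one always has $\dim H^2_{tr}(T_j,\QQ)\ge 2$. Combined with the three hypotheses (case G1: $m=1$ and the sum is $\le 3$; case G2: $m=2$ and the sum is $\le 5$; case G3: $m=3$ and the sum is $\le 7$), a short arithmetic gives $\dim H^2_{tr}(T_j,\QQ)\le 3$ for every $j$, and therefore the Picard rank satisfies $\rho(T_j)\ge 19$.

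Next I would apply Morrison's structure theorem: any K3 surface with Picard rank at least $17$ admits a Shioda--Inose structure. In particular, for each $j$ there is an abelian surface $A_j$ equipped with a Nikulin involution on $T_j$ whose desingularized quotient is birational to the Kummer K3 $\mathrm{Km}(A_j)$. This yields a rational correspondence inducing an isomorphism $h^2_{tr}(T_j)\cong h^2_{tr}(\mathrm{Km}(A_j))$ in $\MM_{\rm rat}$. Since $h^2_{tr}(\mathrm{Km}(A_j))$ is a direct summand of $h^2(A_j)$ and the motive of an abelian variety is Kimura finite-dimensional (Shermenev, Kimura), each $h^2_{tr}(T_j)$ is finite-dimensional.

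Finally, combining with Theorem \ref{main}, the transcendental motive
\[ h^2_{tr}(S)\ \cong\ \bigoplus_{j=1}^m h^2_{tr}(T_j) \]
is finite-dimensional as a finite direct sum of finite-dimensional motives. The complementary summands $h^0(S)$, $h^4(S)$ and $h^{2,alg}(S)$ are direct sums of Lefschetz--Tate motives, hence evenly finite-dimensional. Putting everything together yields Kimura finite-dimensionality of the full motive $h(S)$. The main technical point (where I expect the only mild obstacle) is the upgrade from the lattice-theoretic Shioda--Inose structure to a motivic isomorphism in $\MM_{\rm rat}$, but this is by now a standard application of the Kimura formalism, well-documented in the literature on motives of K3 surfaces with large Picard rank.
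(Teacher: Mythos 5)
Your reduction is the same as the paper's: realize the isomorphism of Theorem \ref{main} on cohomology, use $\dim H^2_{tr}(T_j,\QQ)\ge 2$ to get $\dim H^2_{tr}(T_j,\QQ)\le 3$ (i.e.\ $\rho(T_j)\ge 19$) in each of the three cases, deduce finite-dimensionality of the $T_j$, and then assemble $h(S)$ from $h^2_{tr}(S)\cong\bigoplus_j h^2_{tr}(T_j)$ plus Lefschetz summands. The difference is that at the K3 step the paper simply cites Pedrini \cite{Ped}, whereas you attempt to reprove that result via Morrison's Shioda--Inose structure, and it is exactly there that your argument has a genuine gap. The degree-$2$ rational quotient map $T_j\dashrightarrow \mathrm{Km}(A_j)$ attached to the Nikulin involution $\iota_j$ only exhibits $h^2_{tr}(\mathrm{Km}(A_j))$ as the $\iota_j$-\emph{invariant} direct summand of $h^2_{tr}(T_j)$; the complementary, anti-invariant summand has trivial cohomology (a Nikulin involution acts as the identity on the transcendental lattice), but a motive with trivial cohomology need not vanish. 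Your appeal to ``a standard application of the Kimura formalism'' to kill it is circular: the nilpotence of homologically trivial endomorphisms is available only for motives already known to be finite-dimensional, and finite-dimensionality of $h^2_{tr}(T_j)$ is precisely what you are trying to prove. So the claimed isomorphism $h^2_{tr}(T_j)\cong h^2_{tr}(\mathrm{Km}(A_j))$ in $\MM_{\rm rat}$ requires a non-formal input, e.g.\ Voisin's theorem that symplectic involutions of K3 surfaces act trivially on $A^2_{hom}$ (or, more simply, one cites Pedrini's theorem itself, which is what the paper does).

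Two smaller points. Morrison's result guarantees a Shioda--Inose structure for $\rho\ge 19$ (for $\rho=17,18$ one needs extra lattice conditions), so your blanket claim for $\rho\ge 17$ is inaccurate, though harmless here since you only use $\rho\ge 19$. Also, in the final assembly you should note (or recall) that the Garbagnati surfaces are regular, so that $h^1(S)=h^3(S)=0$ and $h(S)$ is indeed exhausted by $h^0$, $h^4$, $h^2_{alg}$ and $h^2_{tr}$; alternatively, $h^1$ and $h^3$ of any surface are finite-dimensional in any case, so the conclusion stands once the phantom issue above is repaired.
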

 
 \begin{proof}
 Let $m\in\{ 1,2,3\}$ denote the geometric genus $m:=p_g(S)$, and let 
 $T_1,\ldots,T_m$ be the associated K3 surfaces. Recall that there is an isomorphism
     \[ H^2_{tr}(S,\QQ)\cong \oplus_{j=1}^m H^2_{tr}(T_j,\QQ)\ .\]  
   The $T_j$ being K3 surfaces, the dimension of $H^2_{tr}(T_j,\QQ)$ is at least $2$, and so the assumption on $ H^2_{tr}(S,\QQ)$ implies that
   \[ \dim  H^2_{tr}(T_j,\QQ)\le 3\ \ \ (j=1\ldots,m)\ .\]
   It follows from \cite{Ped} that the $T_j$ have finite-dimensional motive. In view of the isomorphism of Theorem \ref{main}, this implies the corollary.
 \end{proof}

%
%
%
%
 
 \subsection{Bloch conjecture for automorphisms}
 
 \begin{corollary}\label{cor1.5} Let $S$ be a Garbagnati surface with $p_g(S)=:m$, and let $\sigma_1,\ldots,\sigma_m$ be the involutions for which the quotient is birational to a K3 surface. Let $f\in\aut(S)$ be a
 finite order automorphism that commutes with $\sigma_1,\ldots,\sigma_m$, and such that
   \[ f^\ast=\ide\colon\ \ \ H^{2,0}(S)\ \to\ H^{2,0}(S)\ .\]
   Then also
    \[ f^\ast=\ide\colon\ \ \ A^{2}_{}(S)\ \to\ A^{2}(S)\ .\] 
   \end{corollary}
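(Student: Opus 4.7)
The plan is to reduce the statement to the Bloch-type conjecture for symplectic automorphisms on each associated K3 surface $T_j$, using the motivic splitting of Theorem \ref{main}. First, since $f$ commutes with each $\sigma_j$, it descends to a finite order automorphism of $Q_j=S/\langle\sigma_j\rangle$, which lifts canonically to a finite order automorphism $\bar f_j$ of the minimal desingularization $T_j$. Using the description of the embedding $H^{2,0}(T_j)\hookrightarrow H^{2,0}(S)$ as the image of pullback through $S\to Q_j$ (as in the proof of Corollary \ref{cor1}), the isomorphism $H^{2,0}(S)\cong\bigoplus_j H^{2,0}(T_j)$ is $f$-equivariant; the hypothesis $f^\ast=\ide$ on $H^{2,0}(S)$ thus forces $\bar f_j^\ast$ to act as the identity on the one-dimensional space $H^{2,0}(T_j)$, i.e.\ each $\bar f_j$ is a \emph{symplectic} automorphism of $T_j$.

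The crucial next step is to invoke the Bloch conjecture for finite order symplectic automorphisms of projective K3 surfaces: any such automorphism acts as the identity on $A^2_{hom}$. This is Voisin's theorem in the case of symplectic involutions, and is now known in general thanks to Huybrechts and subsequent work on motives of K3 surfaces. Applied to each $\bar f_j$, it yields $\bar f_j^\ast=\ide$ on $A^2_{hom}(T_j)$ for every $j$.

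To transport these identities back to $S$, I would use that the isomorphism $A^2_{hom}(S)\cong\bigoplus_j A^2_{hom}(T_j)$ of Theorem \ref{main} is built from the eigenspace projectors for the commuting involutions $\sigma_1,\sigma_2$ and pullback/pushforward through the quotient maps to $Q_j$; since $f$ commutes with each $\sigma_j$, these ingredients are $f$-equivariant, and hence so is the isomorphism. This delivers $f^\ast=\ide$ on $A^2_{hom}(S)$. To extend this to all of $A^2(S)$, pick any point $p\in S$ and set $c:=f^\ast[p]-[p]\in A^2_{hom}(S)$. Then $f^\ast c=c$ by what was just proved; an easy induction gives $(f^\ast)^n[p]=n\,f^\ast[p]-(n-1)[p]$, and applying $(f^\ast)^N=\ide$ for the order $N$ of $f$ collapses this to $f^\ast[p]=[p]$, so $f^\ast=\ide$ on all of $A^2(S)$.

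The main obstacle is the K3 input in the second step: the general finite order case of the Bloch conjecture for symplectic automorphisms of K3 surfaces is a substantial theorem and is where all the geometric content lies. Everything else is formal bookkeeping based on the motivic decomposition already obtained in Theorem \ref{main}, combined with the elementary observation that $f$ commutes with the involutions governing that decomposition.
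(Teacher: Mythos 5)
Your proposal is correct and follows essentially the same route as the paper: descend $f$ to finite-order symplectic automorphisms of the associated K3 surfaces $T_j$, invoke Huybrechts' theorem that symplectic automorphisms of arbitrary finite order act trivially on zero-cycles, and transport the identity back through the ($f$-equivariant) isomorphism of Theorem \ref{main}. The only cosmetic difference is the final passage from $A^2_{hom}(S)$ to all of $A^2(S)$: you use a finite-order telescoping argument on a point class, whereas the paper simply observes that the complementary piece $A^2(S)^{++}\cong\QQ$ is fixed by $f$.
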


   \begin{proof} Since $f$ commutes with the $\sigma_j$, $f$ induces automorphisms $f_j\in\aut(T_j), j=1,\ldots,m$ that are symplectic of finite order. Huybrechts has proven \cite{Huy} that such automorphisms act as the identity on zero-cycles, i.e. one has
    \[  (f_j)^\ast=\ide\colon\ \ \ A^2(T_j)\ \to\ A^2(T_j)\ \ \ (j=1,\ldots,m)\ .\]
    Theorem \ref{main}, combined with the commutative diagram
     \[ \begin{array}[c]{ccc}
        A^2_{hom}(S) & \xrightarrow{ f^\ast} & A^2_{hom}(S) \\
        \uparrow{\scriptstyle (p_j)^\ast}&&  \uparrow{\scriptstyle (p_j)^\ast}\\
        A^2_{hom}(T_j) & \xrightarrow{ (f_j)^\ast} & A^2_{hom}(T_j) \\ 
        \end{array} \ \ \ \ \ \ (j=1,\ldots,m) \]
        implies that
       \[ f^\ast=\ide\colon\ \ \ A^{2}_{hom}(S)\ \to\ A^{2}_{hom}(S)\ .\]   
       Since the subspace $A^2(S)^{++}\cong\QQ$ is also fixed by $f$, this proves the corollary.
      \end{proof}

\vskip0.6cm
\begin{nonumberingt} Thanks to Len and Kai and Yoyo for being wonderful collaborators in the 2019 Special Program ``Papa Werkt Thuis''. 
\end{nonumberingt}

\vskip0.6cm

\end{document}